\theoremstyle{plain}
\newtheorem{thrm}{Theorem}[section]
\newtheorem{lemma}[thrm]{Lemma}
\begin{document}
\newcommand{\sn}{\mathbb{S}^{n-1}}
\newcommand{\SL}{\mathcal L^{1,p}( D)}
\newcommand{\Lp}{L^p( Dega)}
\newcommand{\py}{  \partial_{x_{n+1}}^a}
\newcommand{\La}{\mathscr{L}_a}
\newcommand{\CO}{C^\infty_0( \Omega)}
\newcommand{\Rn}{\mathbb R^n}
\newcommand{\Rm}{\mathbb R^m}
\newcommand{\R}{\mathbb R}
\newcommand{\Om}{\Omega}
\newcommand{\Hn}{\mathbb H^n}
\newcommand{\aB}{\alpha B}
\newcommand{\eps}{\ve}
\newcommand{\BVX}{BV_X(\Omega)}
\newcommand{\p}{\partial}
\newcommand{\IO}{\int_\Omega}
\newcommand{\bG}{\boldsymbol{G}}
\newcommand{\bg}{\mathfrak g}
\newcommand{\bz}{\mathfrak z}
\newcommand{\bv}{\mathfrak v}
\newcommand{\Bux}{\mbox{Box}}
\newcommand{\e}{\ve}
\newcommand{\X}{\mathcal X}
\newcommand{\Y}{\mathcal Y}
\newcommand{\W}{\mathcal W}
\newcommand{\la}{\lambda}
\newcommand{\vf}{\varphi}
\newcommand{\rhh}{|\nabla_H \rho|}
\newcommand{\Ba}{\mathcal{B}_\beta}
\newcommand{\Za}{Z_\beta}
\newcommand{\ra}{\rho_\beta}
\newcommand{\n}{\nabla}
\newcommand{\vt}{\vartheta}
\newcommand{\its}{\int_{\{y=0\}}}

\numberwithin{equation}{section}

\newcommand{\RN} {\mathbb{R}^N}
\newcommand{\Sob}{S^{1,p}(\Omega)}
\newcommand{\Dxk}{\frac{\partial}{\partial x_k}}
\newcommand{\Co}{C^\infty_0(\Omega)}
\newcommand{\Je}{J_\ve}
\newcommand{\beq}{\begin{equation}}
\newcommand{\bea}[1]{\begin{array}{#1} }
\newcommand{\eeq}{ \end{equation}}
\newcommand{\ea}{ \end{array}}
\newcommand{\eh}{\ve h}
\newcommand{\Dxi}{\frac{\partial}{\partial x_{i}}}
\newcommand{\Dyi}{\frac{\partial}{\partial y_{i}}}
\newcommand{\Dt}{\frac{\partial}{\partial t}}
\newcommand{\aBa}{(\alpha+1)B}
\newcommand{\GF}{\psi^{1+\frac{1}{2\alpha}}}
\newcommand{\GS}{\psi^{\frac12}}
\newcommand{\HFF}{\frac{\psi}{\rho}}
\newcommand{\HSS}{\frac{\psi}{\rho}}
\newcommand{\HFS}{\rho\psi^{\frac12-\frac{1}{2\alpha}}}
\newcommand{\HSF}{\frac{\psi^{\frac32+\frac{1}{2\alpha}}}{\rho}}
\newcommand{\AF}{\rho}
\newcommand{\AR}{\rho{\psi}^{\frac{1}{2}+\frac{1}{2\alpha}}}
\newcommand{\PF}{\alpha\frac{\psi}{|x|}}
\newcommand{\PS}{\alpha\frac{\psi}{\rho}}
\newcommand{\ds}{\displaystyle}
\newcommand{\Zt}{{\mathcal Z}^{t}}
\newcommand{\XPSI}{2\alpha\psi \begin{pmatrix} \frac{x}{\left< x \right>^2}\\ 0 \end{pmatrix} - 2\alpha\frac{{\psi}^2}{\rho^2}\begin{pmatrix} x \\ (\alpha +1)|x|^{-\alpha}y \end{pmatrix}}
\newcommand{\Z}{ \begin{pmatrix} x \\ (\alpha + 1)|x|^{-\alpha}y \end{pmatrix} }
\newcommand{\ZZ}{ \begin{pmatrix} xx^{t} & (\alpha + 1)|x|^{-\alpha}x y^{t}\\
     (\alpha + 1)|x|^{-\alpha}x^{t} y &   (\alpha + 1)^2  |x|^{-2\alpha}yy^{t}\end{pmatrix}}
\newcommand{\norm}[1]{\lVert#1 \rVert}
\newcommand{\ve}{\varepsilon}
\newcommand{\D}{\operatorname{div}}
\newcommand{\G}{\mathscr{G}}
\newcommand{\sa}{\langle}
\newcommand{\da}{\rangle}

\title[An observation on eigenfunctions, etc.]{An observation on eigenfunctions of the Laplacian}

\author{Agnid Banerjee}
\address{Tata Institute of Fundamental Research\\
Centre For Applicable Mathematics \\ Bangalore-560065, India}\email[Agnid Banerjee]{agnidban@gmail.com}

\author{Nicola Garofalo}
\address{University of Padova, Italy}\email[Nicola Garofalo]{nicola.garofalo@unipd.it}

\thanks{A. Banerjee  is supported in part by SERB Matrix grant MTR/2018/000267 and by Department of Atomic Energy,  Government of India, under
project no.  12-R \& D-TFR-5.01-0520. N. Garofalo is supported in part by a Progetto SID (Investimento Strategico di Dipartimento): ``Aspects of nonlocal operators via fine properties of heat kernels", University of Padova, 2022. He has also been partially supported by a Visiting Professorship at the Arizona State University}

%
%
%
\keywords{Helmholtz equation, Sharp decay of eigenfunctions, Liouville type theorem}
\subjclass{35P15, 35Q40, 47A75}

\maketitle

\begin{abstract}
In his seminal 1943 paper F. Rellich proved that, in the complement of a cavity $\Om = \{x\in \Rn\mid |x|>R_0\}$, there exist no nontrivial solution $f$ of the Helmholtz equation $\Delta f = - \la f$, when $\la>0$, such that $\int_{\Om} |f|^2 dx < \infty$. In this note we generalise this result by showing that if $\int_{\Om} |f|^p dx < \infty$ for some $0<p\le \frac{2n}{n-1}$, then $f\equiv 0$. This result is sharp since for any $p> \frac{2n}{n-1}$, eigenfunctions do exist in $\Om$. 
\end{abstract}

\section{Introduction}

At the basis of Von Neumann's axiomatic formulation of quantum mechanics is the spectral theorem. Because of this, the problem of ruling out eigenvalues embedded in the continuous spectrum of the time-independent Schr\"odinger operator has occupied a central position in mathematical physics. In his famous paper \cite{Rel}, F. Rellich proved that, if for some $R_0>0$, $f\not\equiv 0$ solves the Helmholtz equation 
\begin{equation}\label{helmo}
\Delta f = - \la f,
\end{equation}
in $\Om = \{x\in \Rn\mid |x|>R_0\}$, then $f\not\in L^2(\Om)$. It is well-known that this result plays an instrumental role in ruling out the existence of positive eigenvalues for $H = - \Delta + V(x)$, when for instance $V$ is compactly supported in $\Rn$. Furthermore, in the same paper Rellich also proved that if for $|x|\to \infty$ one has
\begin{equation}\label{repoint}
f(x) =  o(|x|^{-\frac{n-1}2}),
\end{equation}
then $f\equiv 0$ in $\Om$. In the pointwise category, the assumption \eqref{repoint} is sharp since for any $\la >0$ the function 
\begin{equation}\label{f}
f(x) = \int_{\mathbb S^{n-1}} e^{i \sqrt \la \sa x,\omega\da} d\sigma(\omega) = c(n,\la) |x|^{- \frac{n-2}2} J_{\frac{n-2}{2}}(\sqrt \la |x|),
\end{equation}
is a spherically symmetric solution of \eqref{helmo} in the whole $\Rn$. By the classical asymptotic behaviour of the Bessel functions, one has for $|x|\to \infty$,
\[
J_{\frac{n-2}{2}}(\sqrt \la |x|) = O(|x|^{-1/2}).
\]
It follows from \eqref{f} that 
\begin{equation}\label{O}
f(x) = O(|x|^{-\frac{n-1}2}),
\end{equation}
thus violating \eqref{repoint}. On the other hand, the asymptotic behaviour \eqref{O} implies that $f\in L^p(\Om)$ if and only if $p>\frac{2n}{n-1}$ (note that, since such exponent is $>2$, it is clear that $f\not\in L^2(\Om)$, in accordance with Rellich's cited result).

This leads to the following natural question. Do nontrivial solutions of the Helmholtz equation \eqref{helmo} exist such that for some $0< p \le \frac{2n}{n-1}$, and such that for, say, $\Om^\star = \{x\in \Rn\mid |x|>R_0+4\}$, one has
\begin{equation}\label{p}
\int_{\Om^\star} |f|^p dx <\infty?
\end{equation}

The aim of this note is to prove that the answer is no.

\begin{thrm}\label{T:helmop}
Let $f$ be a solution to \eqref{helmo}, for some $\la>0$. If for some $0< p \le \frac{2n}{n-1}$, the function $f$ satisfies \eqref{p}, then it must be $f\equiv 0$ in $\Om$.
\end{thrm}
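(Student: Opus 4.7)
\medskip

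\noindent\textbf{Plan.} The proof naturally splits according to whether $p\le 2$ or $p>2$.

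For $0<p\le 2$ the plan is to upgrade the hypothesis to $L^2$ and invoke Rellich's classical theorem directly. Since $\Delta f = -\la f$ is uniformly elliptic, a standard interior $L^p$-to-$L^\infty$ estimate---for instance Moser iteration applied to the subsolution inequality $\Delta(f^2)\ge -2\la f^2$---yields
\[
|f(x)|^p\le C\int_{B(x,1)}|f|^p\,dy,\qquad |x|\ge R_0+1.
\]
The right-hand side is the tail of the convergent integral \eqref{p}, hence tends to $0$ as $|x|\to\infty$; thus $f\in L^\infty(\Om^\star)$ with $|f(x)|\to 0$ at infinity. Interpolating, $|f|^2\le \|f\|_{L^\infty(\Om^\star)}^{2-p}\,|f|^p\in L^1(\Om^\star)$, so $f\in L^2(\Om^\star)$ and Rellich's theorem forces $f\equiv 0$.

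For $2<p\le \frac{2n}{n-1}$ a finer argument is needed, since $L^p$ no longer interpolates into $L^2$. The key ingredient I would establish is a \emph{sharp quantitative Rellich lemma}: if $f\not\equiv 0$ solves \eqref{helmo} in $\Om$, then there exist $c>0$ and $R_1>R_0$ with
\[
\int_{R\le|x|\le 2R}|f|^2\,dx\ge cR\qquad\text{for all }R\ge R_1.
\]
To prove this, decompose $f(r\omega)=\sum_k f_k(r)Y_k(\omega)$ in spherical harmonics; each radial coefficient $f_k$ solves a Bessel-type ODE whose fundamental pair has the oscillatory asymptotics $r^{-(n-1)/2}\cos(\sqrt\la\,r+\vartheta_k^\pm)$ at infinity. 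Therefore $r^{n-1}|f_k(r)|^2$ is asymptotically a bounded oscillation whose average over $[R,2R]$ tends to a positive constant whenever $f_k\not\equiv 0$; Parseval on $S^{n-1}$, applied to a single nonvanishing mode, yields the claimed lower bound.

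Granted this lemma, the conclusion is immediate. On the annulus $A_R=\{R\le|x|\le 2R\}$, H\"older's inequality with exponents $p/2$ and $p/(p-2)$ gives
\[
cR\le \int_{A_R}|f|^2\,dx\le\Big(\int_{A_R}|f|^p\,dx\Big)^{2/p}|A_R|^{(p-2)/p}\le C R^{n(p-2)/p}\Big(\int_{A_R}|f|^p\,dx\Big)^{2/p},
\]
which rearranges to
\[
\int_{A_R}|f|^p\,dx\ge c\, R^{(p-n(p-2))/2}.
\]
The assumption $p\le \frac{2n}{n-1}$ is precisely the condition $(p-n(p-2))/2\ge 0$, so $\int_{A_R}|f|^p\,dx\ge c$ uniformly in $R$. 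Summing over dyadic $R=2^j\to\infty$ forces $\int_{\Om^\star}|f|^p\,dx=\infty$, contradicting \eqref{p}; hence $f\equiv 0$. The main obstacle is therefore the sharp quantitative Rellich lemma: Rellich's original theorem gives only the qualitative statement $f\notin L^2(\Om)$, whereas one needs the precise linear growth $\int_{A_R}|f|^2\,dx\ge cR$. Establishing this demands the Bessel-function asymptotics for at least one nontrivial spherical-harmonic mode, with care taken that lower-order corrections cannot absorb the leading oscillation over $[R,2R]$ as $R\to\infty$.
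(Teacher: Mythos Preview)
Your proposal is correct and follows essentially the same strategy as the paper: the quantitative lower bound $\int_{R<|x|<2R}|f|^2\,dx\ge cR$ combined with H\"older on dyadic annuli for $p\ge 2$, and an $L^\infty$ bound plus the interpolation $|f|^2\le\|f\|_\infty^{2-p}|f|^p$ for $0<p<2$.

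Two points of comparison are worth recording. First, what you call the ``main obstacle'' is not one: the linear-in-$R$ lower bound on $\int_{R<|x|<2R}|f|^2\,dx$ is precisely what Rellich proved in his 1943 paper, not merely the qualitative $f\notin L^2$ statement. The paper simply quotes this inequality (its display (2.1)) and proceeds; your spherical-harmonic/Bessel argument would rederive it, but is unnecessary. Second, for the $L^\infty$ bound the paper uses a cleaner device than Moser iteration: it sets $u(x,y)=e^{\sqrt\lambda\,y}f(x)$, observes that $u$ is \emph{harmonic} in $\Omega\times\mathbb R$, and invokes the $L^p$-to-$L^\infty$ mean-value estimate for harmonic functions (valid for all $0<p<\infty$). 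This avoids any iteration and makes the constant depend only on $(n,p,\lambda)$ in a transparent way. Otherwise the two arguments coincide.
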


The proof of Theorem \ref{T:helmop} is given in Section \ref{S:proof}. In closing, we mention that a generalisation of Rellich's cited result to partial differential equations with constant coefficients was obtained by Littman \cite{Lit}. Important extensions to Schr\"odinger operators were found by Kato \cite{Ka}, Agmon \cite{A} and Simon \cite{RS}. We also signal the paper \cite{GS} which, using exclusively Rellich identities, generalised the results of Kato, Agmon and Simon to the degenerate Baouendi-Grushin operator 
\[
\Delta_x + |x|^{2\alpha} \Delta_y,\ \ \ \ \ \ x\in \Rm, y\in \R^k,\ \alpha>0.
\]

\medskip

\noindent \textbf{Acknowledgment:} We thank Matania Ben-Artzi, Ermanno Lanconelli, Eugenia Malinnikova and Yehuda Pinchover for kindly providing helpful feedback on the question raised in this note.


\section{Proof of Theorem \ref{T:helmop}}\label{S:proof}

We begin by recalling, without proof, the following classical $L^\infty$ estimate for harmonic functions. We mention that its proof is a trivial consequence of the mean-value theorem when $1\le p<\infty$, but it is more delicate when $0<p<1$.  

\begin{lemma}\label{L:har}
Let $\Delta u = 0$ in a ball $B(x,4r)\subset \Rn$ (alternatively, one could take a nonnegative subharmonic function). For every $0<p<\infty$, there exists a universal constant $C(n,p)>0$ such that
\[
\underset{\overline B(x,r)}{\max}\ |u| \le C(n,p) \left(\frac{1}{|B(x,2r)|} \int_{B(x,2r)} |u|^p dy\right)^{\frac 1p}.
\]
\end{lemma}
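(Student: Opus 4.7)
The plan is to split the argument by the size of $p$, since the regime $p\ge 1$ follows immediately from subharmonicity, while $0<p<1$ requires a bootstrap iteration.

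For $p\ge 1$, I would use that $|u|^p$ is subharmonic. Indeed, $|u|$ itself is subharmonic (either because $u\ge 0$ in the subharmonic alternative, or because $|u|=\max\{u,-u\}$ is the maximum of two harmonic functions), and composing with the convex non-decreasing function $t\mapsto t^p$ on $[0,\infty)$ preserves subharmonicity. For any $y\in \overline{B(x,r)}$ the ball $B(y,r)$ lies in $B(x,2r)$, and the sub-mean-value inequality gives
\[
|u(y)|^p \le \frac{1}{|B(y,r)|}\int_{B(y,r)}|u|^p\, dz \le \frac{2^n}{|B(x,2r)|}\int_{B(x,2r)}|u|^p\, dz.
\]
Taking $p$-th roots yields the estimate with $C(n,p)=2^{n/p}$.

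For $0<p<1$ the function $|u|^p$ is no longer subharmonic, so I would bootstrap from the just-established $p=1$ case by iterating on the radius. Fix $r\le \rho_1<\rho_2\le 2r$ and set $M(\rho)=\sup_{\overline{B(x,\rho)}}|u|$, which is finite by interior regularity. For $y\in B(x,\rho_1)$ the ball $B(y,\rho_2-\rho_1)$ is contained in $B(x,\rho_2)$, so the case $p=1$ on that ball, combined with the interpolation $|u|=|u|^{p}\,|u|^{1-p}\le M(\rho_2)^{1-p}|u|^{p}$, gives
\[
|u(y)| \le \frac{C(n)}{(\rho_2-\rho_1)^n}\int_{B(x,\rho_2)}|u|\, dz \le \frac{C(n)\, M(\rho_2)^{1-p}}{(\rho_2-\rho_1)^n}\int_{B(x,2r)}|u|^p\, dz.
\]
Taking the supremum over $y\in B(x,\rho_1)$ and applying Young's inequality with conjugate exponents $1/p$ and $1/(1-p)$, for every $\varepsilon\in(0,1)$ I obtain
\[
M(\rho_1) \le \varepsilon\, M(\rho_2) + \frac{C(\varepsilon,n,p)}{(\rho_2-\rho_1)^{n/p}}\Bigl(\int_{B(x,2r)}|u|^p\, dz\Bigr)^{1/p}.
\]
The classical absorption lemma (e.g.\ Giaquinta, \emph{Multiple Integrals in the Calculus of Variations}, Ch.~V, Lemma~3.1) then upgrades this to
\[
M(r) \le \frac{C(n,p)}{r^{n/p}}\Bigl(\int_{B(x,2r)}|u|^p\, dz\Bigr)^{1/p},
\]
which is equivalent to the stated bound because $|B(x,2r)|\sim r^n$.

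The main obstacle is clearly the $0<p<1$ case: the direct subharmonicity argument breaks down, and the correct move is the interpolation $|u|\le M(\rho_2)^{1-p}|u|^p$ combined with Young's inequality and the iteration lemma. A delicate preliminary point is the a priori finiteness of $M(\rho)$ for $\rho<2r$, without which the absorption step would not be legitimate; since $u$ is smooth on the compact set $\overline{B(x,2r)}\subset B(x,4r)$, this is automatic.
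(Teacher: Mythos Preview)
Your proof is correct. The paper does not actually prove this lemma: it explicitly states that the result is ``recalled without proof,'' adding only the remark that the case $1\le p<\infty$ is a trivial consequence of the mean-value theorem, while the case $0<p<1$ is ``more delicate.'' Your two-part argument mirrors exactly this dichotomy: the sub-mean-value inequality for the subharmonic function $|u|^p$ handles $p\ge 1$ just as the paper indicates, and your interpolation $|u|\le M(\rho_2)^{1-p}|u|^p$ followed by Young's inequality and Giaquinta's absorption lemma is one of the standard routes through the delicate small-$p$ regime. So there is nothing to compare beyond noting that you have supplied what the authors chose to omit.
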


In what follows, if $\Om = \{x\in \Rn\mid |x|>R_0\}$, we denote $\Om^\star = \{x\in \Rn\mid |x|>R_0+4\}$. We will need the following auxiliary result. 

\begin{lemma}\label{L:infty}
Let $f$ be a solution to \eqref{helmo} in $\Om$, and suppose that $f$ satisfy \eqref{p} for some $p>0$. Then $f\in L^\infty(\Om^\star)$, and there exists $C(n,\la,p)>0$ such that
\begin{equation}\label{infty}
||f||_{L^\infty(\Om^\star)} \le C(n,\la,p) \left(\int_{\Om} |f|^p dy\right)^{\frac 1p}.
\end{equation}
\end{lemma}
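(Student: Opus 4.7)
The plan is to reduce the statement to Lemma \ref{L:har} by the standard device of lifting $f$ to a harmonic function on a domain in $\R^{n+1}$. Since $\Delta f = -\la f$ in $\Om$, I would set
\[
u(x,t) := f(x)\,\cosh(\sqrt{\la}\,t),\qquad (x,t)\in \Om\times \R,
\]
and note that $u$ is harmonic in $\Om\times\R\subset \R^{n+1}$, because the contribution $f(x)\partial_t^2\cosh(\sqrt{\la}\,t) = \la f(x)\cosh(\sqrt{\la}\,t)$ exactly cancels the eigenvalue term $\cosh(\sqrt{\la}\,t)\Delta_x f = -\la f(x)\cosh(\sqrt{\la}\,t)$.

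Fix any $x_0\in\Om^\star$. The first observation is that the $(n+1)$-dimensional ball $B((x_0,0),4)$ is contained in $\Om\times\R$: indeed, for $(y,t)$ in that ball one has $|y|\ge |x_0|-|y-x_0|>(R_0+4)-4=R_0$. This is precisely the role of the $+4$ buffer in the definition of $\Om^\star$. Consequently, I can apply Lemma \ref{L:har} to the harmonic function $u$ on $B((x_0,0),4)\subset\R^{n+1}$ with $r=1$, obtaining
\[
|f(x_0)|=|u(x_0,0)|\le C(n+1,p)\left(\frac{1}{|B((x_0,0),2)|}\int_{B((x_0,0),2)}|u(y,t)|^p\, dy\, dt\right)^{1/p}.
\]

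To estimate the right-hand side, I would use Fubini together with two elementary bounds: $|\cosh(\sqrt{\la}\,t)|\le\cosh(2\sqrt{\la})$ whenever $|t|\le 2$, and $B_n(x_0,2)\subset\Om$ (which follows from the same containment check as above). Since the $t$-slice of $B((x_0,0),2)$ has length at most $4$, this yields
\[
|f(x_0)|\le C(n,\la,p)\left(\int_{B_n(x_0,2)}|f|^p dy\right)^{1/p}\le C(n,\la,p)\left(\int_\Om |f|^p dy\right)^{1/p},
\]
with a constant independent of $x_0$. Taking the supremum over $x_0\in\Om^\star$ then gives \eqref{infty}.

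I do not anticipate any serious obstacle, since the only non-trivial ingredient -- namely Lemma \ref{L:har} in the full range $0<p<\infty$, including the delicate sub-$1$ range -- has already been granted. The only step requiring attention is the geometric verification that the lifted ball fits inside $\Om\times\R$, which the buffer $+4$ was arranged to guarantee; everything else is routine bookkeeping of constants.
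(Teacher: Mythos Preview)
Your proposal is correct and follows essentially the same approach as the paper: lift $f$ to a harmonic function in $\Om\times\R$ and apply Lemma \ref{L:har} on a ball of fixed size centered at height $t=0$, then absorb the extra variable into the constant. The only cosmetic difference is that the paper uses $u(x,y)=e^{\sqrt{\la}\,y}f(x)$ instead of your $f(x)\cosh(\sqrt{\la}\,t)$; otherwise the geometric containment check and the Fubini estimate are identical.
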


\begin{proof}
In $\Om\times \R$, consider the function $u(x,y) = e^{\sqrt \la y} f(x)$, where $x\in \Om$, $y\in \R$. We have
\[
\Delta_{(x,y)} u = e^{\sqrt \la y} \Delta_x f + \la e^{\sqrt \la y} f(x)= 0.
\]
By Lemma \ref{L:har}, for any $x\in  \Om^\star$ we have
\begin{align*}
& \underset{\overline B((x,0),1)}{\max}\ |u| \le C(n,p) \left(\frac{1}{|B((x,0),2)|} \int_{B((x,0),2)} |u|^p dy dy_{n+1}\right)^{\frac 1p} 
\\
& = \overline C(n,p) \left(\int_{B((x,0),2)} |u|^p dy dy_{n+1}\right)^{\frac 1p}.
\end{align*}
Since the ball $B((x,0),2)\in\R^{n+1}$ is clearly contained in the cylinder $B(x,2) \times [-2,2]$, this estimate easily implies for some $C(n,p,\la)>0$,
\[
|f(x)| \le \underset{\overline B((x,0),1)}{\max}\ |u| \le C(n,p,\la) \left(\int_{B(x,2)} |f|^p dy\right)^{\frac 1p}\le C(n,p,\la) \left(\int_{\Om} |f|^p dy\right)^{\frac 1p}.
\]
By the arbitrariness of $x\in \Om^\star$ we conclude that 
\eqref{infty} does hold.

\end{proof}

We can now give the
\begin{proof}[Proof of Theorem \ref{T:helmop}]
In \cite{Rel} Rellich proved that if $f$ solves \eqref{helmo} in $\Om$, then there exist $R_1>R_0$ sufficiently large, and a constant $M = M(n,\la)>0$, such that for every $R>R_1$ one has
\begin{equation}\label{rellich}
\int_{R<|x|<2R} |f|^2 dx \ge M R.
\end{equation} 
Suppose now that $2\le p \le \frac{2n}{n-1}$. Applying H\"older inequality to \eqref{rellich}, we obtain
\begin{equation}\label{rellich2}
M R \le C(n,p) \left(\int_{R<|x|<2R} |f|^p dx\right)^{\frac 2p} R^{n(1-\frac 2p)}.
\end{equation}
When $p = \frac{2n}{n-1}$, we have $n(1-\frac 2p)=1$, and \eqref{rellich2} gives
\[
0<M  \le C(n,p) \left(\int_{R<|x|<2R} |f|^p dx\right)^{\frac 2p}.
\]
However, this inequality is contradictory with the assumption \eqref{p} for such $p$, since by letting $R\to\infty$, the right-hand side converges to $0$ by Lebesgue dominated convergence. We are left with analysing the case $0<p<2$. In such range, under the hypothesis $f\not\equiv 0$ in $\Om$, Rellich's inequality \eqref{rellich} trivially implies for $R>R_0 + 10$, 
\begin{equation}\label{rellich3}
M R \le ||f||^{2-p}_{L^\infty(\Om^\star)} \int_{R<|x|<2R} |f|^p dx.
\end{equation}
By the real-analyticity (or the unique continuation property) of the Helmholtz equation, we must have $f\not\equiv 0$ in $\Om^\star$, and therefore $||f||_{L^\infty(\Om^\star)}>0$. On the other hand, if $f$ satisfies \eqref{p}, then by Lemma \ref{L:infty} we also have  $||f||_{L^\infty(\Om^\star)} < \infty$. 
Letting $R\to \infty$ in \eqref{rellich3} we thus reach a contradiction again.

\end{proof}

\end{document}